\newtheorem{theorem}{Theorem}[section]
\newtheorem{proposition}[theorem]{Proposition}
\theoremstyle{definition}
\newtheorem{definition}[theorem]{Definition}
\numberwithin{equation}{section}
\renewcommand{\geq}{\geqslant}
\renewcommand{\leq}{\leqslant}
\title{Property($K^*$) Implies $R(X) \leq 1 + \frac{\displaystyle 1}{\displaystyle 1 + r_{X^*}(1)}$}
\author[Tim Dalby]{Tim Dalby}
\date{\today}
\keywords{weak fixed point property, property($K^*$), Opial's condition, Opial's modulus}
\subjclass[2010]{46B10, 47H09, 47H10}
\email{tim\_dalby@bigpond.com}
\begin{document}

\parindent = 0pt
\parskip = 8pt

\begin{abstract}

It is shown that if the dual of a Banach space satisfies Property($K^*$) then $R(X) \leq 1 + \frac{\displaystyle 1}{\displaystyle 1 + r_{X^*}(1)} < 2$ where  $r_{X^*}(c)$ is Opial's modulus for $X^*.$ Thus $X$ has the weak fixed point property.

\end{abstract}

\maketitle

\section{Introduction}
   
The Banach space moduli of  $R(X), r_{X}(c)$ were introduced within the context of fixed point theory.  This area of research looks at whether, in an infinite dimensional real Banach space, every nonexpansive mappings on every weak compact convex nonempty set has a fixed point.  If this is so then the space is said to have the weak fixed point property (w-FPP).  It can be shown that this propery is separably determined, so in this paper the Banach space is assumed to be separable.  

If the sets are not necessarily weak compact but closed then the property is called the fixed point property (FPP).

Another aspect of this topic is that it involves weakly convergent sequences and usually a translation is employed so that the sequence being studied is weakly convergent to zero.  Inherent in most results is the interplay between weak null sequences and the norm.  Much more background can be found in Goebel and Kirk [7] and  Kirk and Sims [8].
   
The effect of the dual space, $X^*,$ on the outcomes in $X$ have also been studied.  See for example [3] and [4].  This paper continues along this line.

Definitions and notation will be presented in the next section.

In [3] Dalby showed that if $X$ is a separable Banach space where the dual, $X^*$, has Property($K^*$) and the nonstrict *Opial condition then $R(X) < 2$.  This last condition implies $X$ has the w-FPP.  For the latter result see [6].

In [1] Benavides showed that if $X$ is a reflexive Banach space and if there exist $c_0 \in (0, 1)$ where $r_{X^*}(c_0) > 0$ then $R(a, X) \leq \max \left \{ 1 + ac_0, a + \frac{\displaystyle 1}{\displaystyle 1 + r_{X^*}(c_0)} \right \}$ where $a \geq 0.$  This in turn implies $R(a, X) < 1 + a \mbox{ if } r_{X^*}(1) > 0.$  Here $r(c)$ is Opial's modulus. Earlier in that paper it was shown that $R(a, X) < 1 + a \mbox{ for some } a > 0$ ensures that $X$ has the w-FPP.  Because of reflexivity, in this case $X$ has the FPP.

Recall from [5] that $R(a, X) < 1 + a$ for some $a > 0$ is equivalent to $R(1, X) < 2$ and that $R(X) < 2$ implies $R(1, X) < 2.$

So the result in [1] can be rewritten as: If $X$ is a reflexive Banach space where $r_{X^*}(1) > 0$ then $X$ has the FPP.   But Dalby proved in [2] that $r_{X}(1) > 0$ is equivalent to $X$ having Property($K$).  This proof can readily be transferred to $X^*$.  That is $r_{X^*}(1) > 0$ is equivalent to $X^*$ having Property($K^*$).  Hence the result in [1] can be again rewritten as: If $X$ is a reflexive Banach space where $X^*$ has Property($K^*$) then $X$ has the FPP.

So the results of Benavides and Dalby are similar.

In this paper the requirement that $X$ be reflexive has been changed to $X$ being separable and the condition that $X^*$ has the nonstrict *Opial property has been removed.

\section{Definitions}

\begin{definition}

Sims, [11]

A Banach space $X$ has property($K$) if there exists $K \in [0, 1)$ such that whenever $x_n \rightharpoonup 0, \lim_{ n \rightarrow \infty} \| x_n \| = 1 \mbox{ and }  \liminf_{n \rightarrow \infty} \| x_n - x \|  \leq 1 \mbox{ then } \| x \| \leq K.$

If the sequence is in $B_{X^*}$ and is weak* convergent to zero then the property is called Property($K^*$).

\end{definition}

\begin {definition}

Opial [10] 

A Banach space has Opial's condition if
\[ x_n \rightharpoonup 0 \ \mbox {and } x \not = 0 \mbox { implies } \limsup_n \| x_n \| < \limsup_n \| x_n - x \|. \]
The condition remains the same if both the $\limsup$s are replaced by $\liminf$s.

If the $<$ is replaced by $\leq$ then the condition is called nonstrict Opial.

In $X^*$ the conditions are the same except that the sequence is weak* null.

\end {definition}

Later a modulus was introduced to gauge the strength of Opial's condition and a stronger version of the condition was defined.

\begin{definition}

Lin, Tan and Xu, [9]

Opial's modulus is
\[ r_X(c) = \inf \{ \liminf_{n\rightarrow \infty} \| x_n - x \| - 1: c \geq 0, \| x \| \geq c,  x_n \rightharpoonup 0 \mbox{ and }\liminf_{n\rightarrow \infty} \| x_n \| \geq 1 \}. \]
$X$ is said to have uniform Opial's condition if $r_X(c) > 0$ for all $c > 0.$  See [9] for more details.

\end{definition}

\section{Result}

\begin{proposition}

Let $X$ be a separable Banach space with $X^*$ having Property($K^*$).  Then $R(X) \leq 1 + \frac{\displaystyle 1}{\displaystyle 1 + r_{X^*}(1)}.$

\end{proposition}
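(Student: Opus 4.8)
The plan is to bound $\liminf_{n}\|x+x_{n}\|$ for an arbitrary weakly null sequence $(x_{n})$ in $B_{X}$ and an arbitrary $x\in B_{X}$; these are exactly the configurations appearing in the definition of $R(X)$, so the asserted bound follows. Write $r=r_{X^{*}}(1)$, which is $>0$ because $X^{*}$ has Property($K^{*}$) (the equivalence $r_{X^{*}}(1)>0\Leftrightarrow$ Property($K^{*}$) is recalled in the introduction). First I would pass to a subsequence along which $\|x+x_{n}\|\to D:=\liminf_{n}\|x+x_{n}\|$ and $\|x_{n}\|\to\alpha\in[0,1]$. Setting aside the trivial situations ($x+x_{n}=0$ infinitely often, or $\alpha=0$, where $D\le1$ at once), pick norming functionals $f_{n}\in S_{X^{*}}$ with $f_{n}(x+x_{n})=\|x+x_{n}\|$. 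Separability is used here: $B_{X^{*}}$ is weak$^{*}$ sequentially compact, so after a further subsequence $f_{n}\to f$ weak$^{*}$, with $\|f\|\le1$, and $\|f_{n}-f\|\to M$ for some $M\ge0$.

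The basic identity comes from $f_{n}(x+x_{n})=f_{n}(x)+f_{n}(x_{n})$: letting $n\to\infty$ and using $f_{n}(x)\to f(x)$ shows that $s:=\lim_{n}f_{n}(x_{n})$ exists and $D=f(x)+s$. Put $g_{n}=f_{n}-f$. Then $(g_{n})$ is weak$^{*}$ null, $g_{n}(x_{n})=f_{n}(x_{n})-f(x_{n})\to s$ since $x_{n}\rightharpoonup 0$, hence $M=\lim_{n}\|g_{n}\|\ge s/\alpha$, i.e. $s\le M\alpha$; and $g_{n}+f=f_{n}$ has norm $1$ for every $n$. I also note the elementary bounds $f(x)\le\|f\|\,\|x\|\le\|f\|$ and $s\le\alpha\le1$.

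The heart of the proof is a dichotomy on the sizes of $M$ and $\|f\|$, in which $(g_{n})$ is rescaled so as to be admissible in the definition of $r_{X^{*}}$ while the translate becomes a vector of norm $\ge 1$ — so that Opial's modulus is used at an argument $\ge1$, where, being nondecreasing, it is $\ge r$. If $M\ge\|f\|>0$: apply $r_{X^{*}}$ to the weak$^{*}$ null sequence $g_{n}/\|f\|$, whose limiting norm is $M/\|f\|\ge1$, and the vector $-f/\|f\|$ of norm $1$; since $(g_{n}/\|f\|)-(-f/\|f\|)=f_{n}/\|f\|$ has norm $1/\|f\|$, this gives $1/\|f\|\ge1+r$, so $\|f\|\le\tfrac{1}{1+r}$ and $D=f(x)+s\le\|f\|+1\le1+\tfrac{1}{1+r}$. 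If $M<\|f\|$: apply $r_{X^{*}}$ to $g_{n}/M$, whose limiting norm is $1$, and the vector $-f/M$, whose norm $\|f\|/M$ now exceeds $1$; since $(g_{n}/M)-(-f/M)=f_{n}/M$ has norm $1/M$, this gives $1/M\ge1+r$, so $M\le\tfrac{1}{1+r}$, whence $s\le M\alpha\le\tfrac{1}{1+r}$ and $D=f(x)+s\le\|f\|+\tfrac{1}{1+r}\le1+\tfrac{1}{1+r}$. The leftover case $\|f\|=0$ is absorbed by $D=s\le1$. In all cases $D\le1+\tfrac{1}{1+r}$, so $R(X)\le1+\tfrac{1}{1+r_{X^{*}}(1)}$, which is $<2$ since $r_{X^{*}}(1)>0$; hence $X$ has the w-FPP.

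The one step I expect to require genuine care is this rescaling: recognising that dividing by $\|f\|$ in one branch and by $M$ in the other is precisely what simultaneously makes the rescaled sequence admissible for Opial's modulus ($\liminf$-norm $\ge1$) and forces the translate to have norm $\ge1$, so that only the hypothesis $r_{X^{*}}(1)>0$ — equivalently Property($K^{*}$) — is needed, and so that the constant produced is $1+\tfrac{1}{1+r_{X^{*}}(1)}$ and nothing larger. The remaining ingredients — weak$^{*}$ lower semicontinuity of the norm (for $\|f\|\le1$), $f(x_{n})\to0$, the limit manipulations defining $s$, and the disposal of the degenerate cases $x+x_{n}=0$, $\alpha=0$, $\|f\|=0$, $M=0$ — are routine.
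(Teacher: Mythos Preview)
Your proposal is correct and follows essentially the same route as the paper: norming functionals $f_n$ for $x+x_n$, a weak$^*$-convergent subsequence via separability, and the same dichotomy on whether $M=\lim\|f_n-f\|$ dominates $\|f\|$ or not, rescaling by $\|f\|$ in one branch and by $M$ in the other before invoking $r_{X^*}(1)$. The only cosmetic differences are that you pass to subsequences up front (turning the paper's $\liminf$'s into genuine limits and thereby making the limit manipulations cleaner) and that you fold the paper's four cases into a dichotomy plus degenerate cases; the substance is identical.
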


\begin{proof}

Let $X$ be a separable Banach space such that $X^*$ has Property($K^*$).  Then $r_{X^*}(1) > 0.$  Also, let $x_n \in B_X \mbox{ for all } n , x_n \rightharpoonup 0 \mbox{ and } x \in B_X.$  Then for each $n \geq 1$ choose $x_n^* \in X^*$ such that $\| x_n^* \| = 1 \mbox{ and }x_n^*(x_n + x) = \| x_n + x \|$.  Using the property that $X$ is separable we can assume, without loss of generality, that $x_n^* \stackrel{*}{\rightharpoonup} x^* \mbox { where } \| x^* \| \leq 1$.

\begin{align*}
\liminf_{n \rightarrow \infty} \| x_n + x \| & = \liminf_{n \rightarrow \infty} x_n^*(x_n + x) \\
& = \liminf_{n \rightarrow \infty} x_n^*(x_n) + x^*(x) \\
& \leq \liminf_{n \rightarrow \infty} \| x_n^* \| \| x_n \| + \|x^* \| \| x \| \\
& = \liminf_{n \rightarrow \infty} \| x_n \| + \|x^* \| \| x \| \\
& \leq 1 + \| x^* \|. \qquad \qquad \dag
\end{align*}

The proof now splits into four cases.
 
{\bf Case 1 $ \bf x^* = 0$}

From \dag \hspace{1 pt} we have $\liminf_{n \rightarrow \infty} \| x_n + x \| \leq 1.$  This means that $R(X) \leq 1$ which in turn implies $R(X) = 1 \leq 1 + \frac{\displaystyle 1}{\displaystyle 1 + r_{X^*}(1)}.$

{\bf Case 2 $\bf x^* \not = 0, \| x^* \| \leq  \liminf_{n \rightarrow \infty} \| x_n^* - x^* \|$}

We have $\frac{\displaystyle x_n^*}{\displaystyle \| x^* \|} - \frac{\displaystyle x^*}{\displaystyle \| x^* \|} \stackrel{*}{\rightharpoonup} 0 \mbox{ and } \liminf_{n \rightarrow \infty} \left \| \frac{\displaystyle x_n^*}{\displaystyle \| x^* \|} - \frac{\displaystyle x^*}{\displaystyle \| x^* \|} \right \| \geq 1.$

Thus

\begin{align*} 
\liminf_{n \rightarrow \infty} \left \| \frac{x_n^*}{\| x^* \|} - \frac{x^*}{\| x^* \|} + \frac{x^*}{\| x^* \|}\right \| & \geq r_{X^*}(1) + 1 \\
1 = \lim_{n \rightarrow \infty} \| x_n^* \| & \geq \| x^* \| (r_X(1) + 1) \\
\| x^* \| & \leq \frac{1}{\displaystyle r_{X^*}(1) + 1}.
\end{align*}

From \dag \hspace{1pt} we have $\liminf_{n \rightarrow \infty} \| x_n + x \| \leq 1 + \frac{\displaystyle 1}{\displaystyle r_{X^*}(1) + 1}.$

Therefore $R(X) \leq 1 + \frac{\displaystyle 1}{\displaystyle r_{X^*}(1) + 1}.$

{\bf Case 3 $ \bf x^* \not = 0, 0 < \liminf_{n \rightarrow \infty} \| x_n^* - x^* \| < \| x^* \|$}

To make the presentation easier to read let $a = \liminf_{n \rightarrow \infty} \| x_n^* - x^* \|$ then \newline $a < \| x^* \| \leq 1.$

Note that $\frac{\displaystyle x_n^* - x^*}{\displaystyle a} \stackrel{*}{\rightharpoonup} 0 \mbox{ and } \liminf_{n \rightarrow \infty}\left \| \frac{\displaystyle x_n^* - x^*}{\displaystyle a} \right \| = 1.$

Now $\liminf_{n \rightarrow \infty}\left \| \frac{\displaystyle x_n^*}{\displaystyle a} - \frac{ \displaystyle x^*}{\displaystyle a} + \frac{\displaystyle  x^*}{\displaystyle a} \right \|  = \liminf_{n \rightarrow \infty} \frac{\|\displaystyle  x_n^* \|}{\displaystyle a} = \frac{\displaystyle 1}{\displaystyle a}.$

Using $\frac{\displaystyle \| x^* \|}{\displaystyle a} > 1, \liminf_{n \rightarrow \infty}\left \| \frac{\displaystyle x_n^*}{\displaystyle a} - \frac{\displaystyle  x^*}{\displaystyle a} + \frac{\displaystyle  x^*}{\displaystyle a} \right \|  \geq r_{X^*}(\frac{\displaystyle \| x^* \|}{\displaystyle a})  + 1 \geq  r_{X^*}(1) + 1$ since $r_{X^*}(c)$ is nondecreasing.

Thus $1 \geq a(r_{X^*}(1) + 1)$ and $ \frac{\displaystyle 1}{\displaystyle r_{X^*}(1) + 1} \geq a = \liminf_{n \rightarrow \infty} \| x_n^* - x^* \|.$

This leads to

\begin{align*}
\liminf_{n \rightarrow \infty} \| x_n + x \| & = \liminf_{n \rightarrow \infty} x_n^*( x_n + x ) \\
& = \liminf_{n \rightarrow \infty} (x_n^* - x^*)(x_n + x) + \liminf_{n \rightarrow \infty} x^*( x_n + x ) \\
& = \liminf_{n \rightarrow \infty} ( x_n^* - x^*)(x_n)+ x^*(x) \\
& \leq \liminf_{n \rightarrow \infty} \| x_n^* - x^* \| \| x_n \| + \| x \| \qquad \dag\dag \\
& \leq \frac{1}{r_{X^*}(1) + 1} + 1 \\
\mbox{ So } R(X) & \leq 1 + \frac{1}{r_{X^*}(1) + 1}.
\end{align*}

{\bf Case 4 $ \bf x^* \not = 0, \liminf_{n \rightarrow \infty} \| x_n^* - x^* \| = 0$} 

From \dag\dag, $\liminf_{n \rightarrow \infty} \| x_n + x \| \leq \| x \|$ but weak lower semicontinuity of the norm means $\| x \| \leq \liminf_{n \rightarrow \infty} \| x_n + x \|.$

Therefore $\liminf_{n \rightarrow \infty} \| x_n + x \| = \| x \| \leq 1\leq 1 + \frac{\displaystyle 1}{\displaystyle r_{X^*}(1) + 1}$ leading to \newline $R(X)  \leq 1 + \frac{\displaystyle 1}{\displaystyle r_{X^*}(1) + 1}.$
\end{proof}

{\bf Note } Careful reading of the proof reveals that $R(X) \leq 1 + \frac{\displaystyle 1}{\displaystyle 1 + r_{X^*}(1)}$ is {\bf always } true.

So Opial's modulus for $X^*$ plays a role in the behaviour of weak null sequences in $X.$

Property($K^*$)'s role is ensure that $r_{X^*}(1) > 0$ which in turn means that $R(X) < 2.$

{\bf Remark} Using $R(1, X) \leq RW(1, X) \leq R(X)$ we have:

Property($K^*$) implies $RW(1, X) \leq 1 + \frac{\displaystyle 1}{\displaystyle r_{X^*}(1) + 1}< 2$

and

Property( $K^*$) implies $R(1, X) \leq 1 + \frac{\displaystyle 1}{\displaystyle r_{X^*}(1) + 1} < 2.$ 

In [5] it was shown that $RW(1, X) < 2$ is equivalent to $RW(a, X) < 1 + a$ for some $a > 0.$  Similarly, $R(1, X) < 2$ is equivalent to $R(a, X) < 1 + a$ for some $a > 0.$

\end{document}